\def\FF{\mathbb{F}}
\def\O{\mathcal{O}}
\def\f{\varphi}
\def\di{\partial}
\DeclareMathOperator{\argmin}{argmin}
\DeclareMathOperator{\rank}{rank}
\DeclareMathOperator{\diam}{diam}
\DeclareMathOperator{\row}{Row}
\DeclareMathOperator{\low}{Low}
\DeclareMathOperator{\col}{Col}
\DeclareMathOperator{\Rips}{Rips}
\DeclareMathOperator{\Ker}{Ker}
\DeclareMathOperator{\im}{Im}
\newcommand{\ii}[1]{\tikzmarkin[set fill color=blue!10,set border color=blue]{#1}(-.3,-.2)(.3,.4){}1 \tikzmarkend{#1}}
\newtheorem{theorem}{Theorem}
\newtheorem{remark}{Remark}
\newtheorem{proposition}{Proposition}
\title[Fast computation of persistent homology representatives...]{Fast computation of persistent homology representatives with involuted persistent homology}
\author{Matija \v Cufar}
\address{NZ Institute for Advanced Study, Massey University, Auckland, New Zealand}
\email{matijacufar@gmail.com}
\author{\v Ziga Virk}
\address{University of Ljubljana and Institute IMFM, Ljubljana, Slovenia}
\email{ziga.virk@fri.uni-lj.si}
\thanks{The second author was  supported by Slovenian Research Agency grants No. N1-0114, J1-4001, J1-4031, and P1-0292.
The authors would like to thank the referee for careful reading and valuable comments.}
\begin{document}
\maketitle

\begin{abstract}
Persistent homology is typically computed through persistent cohomology. While this generally improves the running time significantly, it does not facilitate extraction of homology representatives.
The mentioned representatives are geometric manifestations of the corresponding holes and often carry desirable information.

 We propose a new method of extraction of persistent homology representatives using cohomology. In summary, we first compute persistent cohomology and use the obtained information to significantly improve the running time of the direct persistent homology computations. This algorithm applied to Rips filtrations generally computes persistent homology representatives much faster than the standard methods.
\end{abstract}

\section{Introduction}
Persistent homology~\cite{EdelsHarer} is a widely applicable stable descriptor of metric spaces. It is usually computed from coboundary matrices, in effect computing the isomorphic persistent cohomology. The reason is that the structure of cohomology is more amenable to certain speedups and in most cases results in significantly faster computation~\cite{bauer2019ripser, de2011circular, de2011dualities} than the original computation of persistent homology from the boundary matrices~\cite{ELZ}.

While persistent homology contains information about the lifespans of homology elements interpreted as holes in various dimensions, the actual cycles representing these holes can only be extracted from the reduced boundary matrix. When computations are carried out by reducing the coboundary matrix though, we can only obtain representative cocycles. These are favorable in some specific settings~\cite{de2011circular}, but in general cycles are the much preferred method of visualization and expression of persistent homology elements. The difference between them is demonstrated in Figure~\ref{fig:cycles-cocycles}.

\textbf{Contributions}. In this paper, we present an algorithm that allows us to leverage the speed of
persistent cohomology computation and to recover representative cycles. The essential part of the algorithm has two distinct phases:
\begin{enumerate}
 \item Reduce the coboundary matrix and extract simplices contributing to the computation of representative cycles.
 \item Reduce the boundary matrix restricted to the columns determined by the previous part.
\end{enumerate}

In effect we use the reduced coboundary matrix to determine the death simplices of persistence pairs and then ``ignore'' all other columns in the reduction of the boundary matrix.
We call this approach involuted homology computation.
The efficiency of our algorithm as compared to the standard approach depends on the disparity of the reduction times for persistent homology and cohomology.
While the algorithm works for any filtration, we restrict our comparison to Rips complexes, as the speedup there is the most significant.
A comparative analysis suggests our approach is the fastest way to obtain the representatives in almost all settings.

\textbf{Related work}. In this paper we describe how to compute homology representatives arising from the reduction process of persistent homology in any dimension. The standard way of obtaining the representatives through the reduced boundary matrix is given in Subsection~\ref{SubsRepresentative}: the representatives are the columns corresponding to the death simplices in the reduced boundary matrix. We apply the idea of this procedure in phase 2 to a much restricted boundary matrix. Phase 1 is based on speedups to the reduction of the coboundary matrix achieved by Ripser~\cite{bauer2019ripser}. The speedups arise from certain connections with discrete Morse theory and observations on the matrix structures.

Amongst the existing software Eirene~\cite{henselman2016matroid} seems to handle the representative computations efficiently and has thus been used for comparative purposes. It is based on fairly recent relationships between discrete Morse theory, matroid theory, and matrix factorizations that facilitate a more economical computation persistent homology and representative cycles than the standard algorithm~\cite{ELZ}. However, in contrast to our approach it does not employ the structure of cohomology.

The homology representatives computed through any of the mentioned approaches above are typically the initial representations obtained from persistent homology computations.
Further modifications and optimizations of these cycles have been treated in~\cite{Tamal1, Tamal2, Chao, Optimal1, Optimal2}.
In a different setting a nominally similar but essentially different problem has been considered in~\cite{Kozlov}. These approaches focus on modifications of representative cycles rather than their initial computation and thus complement our results.

\begin{figure}
  \includegraphics[width=.8\textwidth]{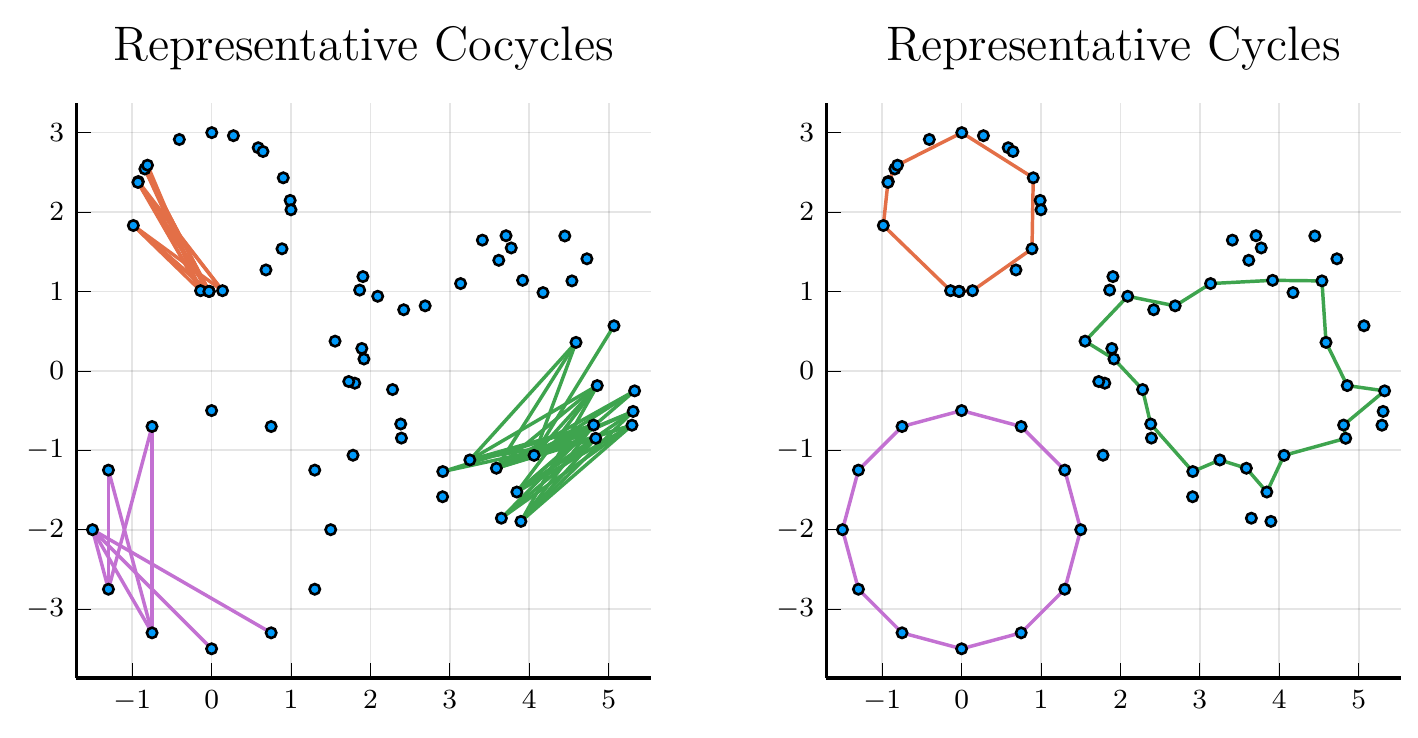}
  \caption{\label{fig:cycles-cocycles}
    Comparison of persistence cocycles and persistence cycles on a data set consisting of 20 points sampled randomly from a circle (red), 12 points sampled uniformly from a circle (purple), and 40 points sampled randomly from an annulus (green).}
  \end{figure}

\section{Theoretical background}

In this section we review theoretical background on persistent homology and cohomology, for further details on these topics see~\cite{EdelsHarer} and~\cite{de2011dualities}.

Persistent homology is a parameterized version of homology. Roughly speaking, it contains information on the holes in a space along with their sizes. In Subsection~\ref{SubsPH} we describe the structure of persistent homology arising from a filtration, i.e., a growing simplicial complex. The crucial observation here is that an addition of a single simplex either creates or terminates a hole in a simplicial complex. A hole of bounded lifespan corresponds to a specific persistence pair consisting of a birth and death simplex, while holes that never terminate have only an associated birth simplex. In Subsection~\ref{SubsRed} we recall the standard matrix reduction algorithm identifying the mentioned pairs of birth-death simplices. It is essentially a column reduction of the boundary matrix, proceeding from left to right. In Subsection~\ref{SubsRepresentative} we provide details on the standard way to extract homology representatives (representative cycles) from the reduced boundary matrix, which motivates phase 2 of our algorithm. Subsection~\ref{SubsCoh} provides a dual way to obtain the mentioned pairs of birth-death simplices by essentially reducing the coboundary matrix, i.e., the transpose of the boundary matrix. This approach turns out to be computationally more economical but does not allows us to extract homological representatives. However, we incorporate a version of it in phase 1 of our algorithm. We conclude by providing details on Rips complexes in Subsection~\ref{SubsRips}.
\subsection{Persistent homology}\label{SubsPH}

Throughout the paper we fix a field of coefficients $\FF$ for all mentioned vector spaces. Let $K$ be a finite simplicial complex.  A \textbf{filtration} of $K$ is a nested collection of subcomplexes:
$$
K_1 \leq K_2 \leq \ldots \leq K_m=K.
$$
A \textbf{filtration function} associated to a filtration is a function $\f$ assigning to each simplex $\sigma \in K$ the index $\f(\sigma)=\argmin_i \{\sigma \in K_i\}$. Throughout the paper we assume that $\f$ is injective, i.e., each $K_i$ is obtained from $K_{i-1}$ by an addition of a single simplex $\sigma_i$.

For $p\in\{0,1,\ldots\}$ we define the $p$-\textbf{chains} $C_p(K_i)$ as the vector space spanned by all oriented $p$-simplices of $K_i$. The $p$-\textbf{boundary map} is the linear map $\di_p\colon C_p(K)\to C_{p-1}(K)$ defined by
$$
\di_p\langle v_0, v_1, \ldots, v_p \rangle = \sum_{j=0}^{p} (-1)^j \langle v_0, \ldots, \hat v_j, \ldots,  v_p \rangle,
$$
where $\langle v_0, v_1, \ldots, v_p \rangle$ is a $p$-simplex in $K$, and $\langle v_0, \ldots, \hat v_j, \ldots,  v_p \rangle$ is the $(p-1)$-simplex in $K$ obtained by removing the vertex $v_j$ from the original simplex. We will also use the notation $\di_p$ to denote the corresponding boundary matrix, with the columns and rows being ordered in the ascending order of filtration function $\f$. Occasionally we will use $\di$ to denote the boundary map (and the corresponding square matrix) on all chains on all dimensions, with the ordering of rows and columns provided by $\f$.

\textbf{Homology} of $K_i$ in dimension $p$ is the quotient vector space
$$
H_p(K_i) = \frac{\Ker \di_p }{  \im \di_{p+1}}.
$$

For $i \leq j$ the inclusion $K_i \hookrightarrow K_{j}$ induces a linear map $H_p (K_i \hookrightarrow K_{j})$ on the level of homology.  \textbf{Persistent homology} in dimension $p$ is the collection of ranks of inclusion induced maps of a filtration on homology. In particular, it consists of \textbf{Betti numbers}
$$
\beta^p_{i,j} = \rank H_p (K_i \hookrightarrow K_{j}), \quad \forall 1 \leq i \leq j \leq m.
$$

Persistent homology is typically visualized and described by a persistence diagram, which contains the same information as persistent Betti numbers (see Fundamental Lemma of Persistent Homology in~\cite{EdelsHarer}). Persistence diagrams and the corresponding algorithm of Subsection~\ref{SubsRed} are motivated by the following observation.  For each $i$, the addition of and $n$-dimensional simplex $\sigma_i$ to $K_{i-1}$ either:
\begin{enumerate}
 \item creates an $n$-dimensional homology class, or
 \item destroys an $(n-1)$-dimensional class.
\end{enumerate}
Simplices of type (1) are called \textbf{birth simplices}. A simplex $\sigma_i$ is a birth simplex iff its boundary is a linear (with coefficients in $\FF$) combination of boundaries in $K_{i-1}$.

Simplices of type (2) are called \textbf{death simplices}. A simplex $\sigma_i$ is a death simplex iff its boundary is not a linear (with coefficients in $\FF$) combination of boundaries in $K_{i-1}$.

Each death $n$-simplex $\sigma_i$ is paired to a unique birth $(n-1)$-simplex $\sigma_j$ with $j < i$ to form a \textbf{persistence} (homology) \textbf{pair}. The addition of $\sigma_i$ destroys homology class created by $\sigma_j$. Birth simplices not contained in any persistence pair are called \textbf{essential} simplices.

A \textbf{persistence diagram} associated to a filtration is the collection of points $$\{(i,j) \mid (\sigma_i, \sigma_j) \text{ a persistence pair } \} \ \cup
$$
$$
  \{(i, \infty) \mid \sigma_i \text{ an essential simplex } \}.$$
  We next describe how to obtain persistence diagrams without the use of persistent Betti numbers.

\subsection{Reduction algorithm for persistent homology}\label{SubsRed}

The observations motivating the definitions of birth and death simplices led to the original reduction algorithm of~\cite{ELZ}, which returns persistence pairs and essential simplices. Let $\di$ be the full  boundary matrix of  $K$ with the columns and rows indexed by $\{1,2,\ldots, m\}$. Index $i$ represents simplex $\sigma_i$. Given a matrix $A$ whose rows and columns are indexed by $i\in \{1,2,\ldots, m\}$ define:
\begin{itemize}
\item  $\col_A(i)$ as the $i$-th column of $A$ as a vector.
\item  $\row_A(j)$ as the $j$-th row of $A$ as a vector.
\item  $\low_A(i)$ as the index of the lowest non-trivial entry in $\col_A(i)$ or $0$ if the column is trivial.
\end{itemize}

\begin{algorithm}[H]
 \caption{\label{Alg1}Column reduction algorithm for persistent homology.}
\KwData{boundary matrix $\di$}
 \KwResult{reduced boundary matrix}

\For{$i=1,2, \ldots, m$}
	{
	\For{$j=m-1,m-2, \ldots, 1$}
		{
		\If {$\low_\di(i)=\low_\di(j)\neq 0$}
			{
			$\lambda= \di(i,\low_\di(i))/ \di(j,\low_\di(j))$ \\
			$\col_\di(i) = \col_\di(i) - \lambda \col_\di(j)$
			}
		}
	}
\KwRet{$\di$}\\
\medskip
\end{algorithm}
\medskip

The reduction algorithm is essentially a column reduction process from left to right. The core idea is to verify whether the boundary of the added simplex $\sigma_i$ (i.e., $\col_\di(i)$) is homologically trivial in $K_{i-1}$ or not, by verifying whether it can be expressed as a linear combination of preceding columns.

Let $\di'$ denote the reduced boundary matrix as reduced by Algorithm~\ref{Alg1}. We can now \textbf{extract persistence diagrams} from the structure of $\di'$. If $\col_{\di'}(i)$ is not trivial then $(\sigma_{\low_{\di'}(i)}, \sigma_i)$ is a persistence pair. Simplices unpaired in this manner are essential simplices.

\subsection{Representatives}\label{SubsRepresentative}

The standard representative of a persistence pair $(\sigma_{\low_{\di'}(i)}, \sigma_i)$ is the vector represented by $\col_{\di'}(i)$. A representative is a chain whose homology class spans the homology that appeared at $\low_{\di'}(i)$ and died at $i$.

Given an essential simplex $\sigma_i$ Algorithm~\ref{Alg1} reduces  $\col_\di(i)$ to the trivial column, i.e., there exist $\lambda_j\in \FF$ such that $\col_\di(i) - \sum_{j=1}^{i-1} \lambda_j \col_\di(j)=0$. The standard representative corresponding to the essential simplex $\sigma_i$ is the cycle $\sigma_i - \sum_{j=1}^{i-1} \lambda_j \sigma_j$. This representative is a chain whose homology class appears at $\low_{\di'}(i)$ and never dies.

\subsection{Persistent cohomology}\label{SubsCoh}

While the definition of homology $H_p(K)$ is based on chains ($\FF$-combinations of simplices), the definition of cohomology $H^p(K)$ (see~\cite{Hatcher} for an introduction) is based on cochains ($\FF$-linear maps from the space of chains into $\FF$). It turns out that both invariants are isomorphic but that cohomology is contravariant in the sense that it reverses the direction of induced maps. The following result is a well known consequence of the universal coefficient theorem and contravariant functoriality of cohomology. In our setting it has first appeared in~\cite{de2011dualities}.

\begin{theorem}\label{ThmMain}
Let $K \hookrightarrow L$ be an inclusion of simplicial complexes. Then for each dimension $p$ there exists a commutative diagram
$$
\xymatrix
{H_p(K) \ar[d] \ar[r] & H_p(L) \ar[d] \\
H^p(K) & H^p(L) \ar[l]}
$$
with the vertical maps being isomorphisms.
 \end{theorem}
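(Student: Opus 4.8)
The plan is to recognize the diagram as an instance of the universal coefficient theorem, which over the field $\FF$ collapses to a clean duality, and then to check that it is natural in the inclusion.

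\emph{Step 1: constructing the vertical isomorphisms.} Fix a finite simplicial complex $X$. Since the $p$-simplices of $X$ form a basis of $C_p(X)$, I would identify $C^p(X)$ with $C_p(X)$ via the associated dual basis; under this identification the coboundary $\delta^{p-1}$ becomes literally the transpose $\di_p^{T}$. Hence $Z^p(X)=\Ker\di_{p+1}^{T}$ is the annihilator $B_p(X)^{\perp}$, and $B^p(X)=\im\di_p^{T}$ is the annihilator $Z_p(X)^{\perp}$, where $\perp$ is taken with respect to the standard pairing on $C_p(X)$ given by the simplex basis. Because $B_p(X)\subseteq Z_p(X)$, the elementary fact that $W^{\perp}/V^{\perp}\cong (V/W)^{*}$ for subspaces $W\subseteq V\subseteq C_p(X)$ yields a canonical isomorphism
\[
\theta_X\colon\; H^p(X)=B_p(X)^{\perp}/Z_p(X)^{\perp}\;\xrightarrow{\ \cong\ }\;\bigl(Z_p(X)/B_p(X)\bigr)^{*}=H_p(X)^{*},\qquad \theta_X([\varphi])([z])=\varphi(z),
\]
which is just the universal coefficient isomorphism, the $\operatorname{Ext}$-term vanishing over a field. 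As $X$ is finite all spaces here are finite-dimensional, so $\theta_X^{-1}$, optionally precomposed with the dual-basis identification $H_p(X)\cong H_p(X)^{*}$, furnishes the vertical isomorphism $H_p(X)\to H^p(X)$ required by the statement; the cleanest bookkeeping, though, is to keep the duality in its canonical form $\theta_X\colon H^p(X)\xrightarrow{\cong}H_p(X)^{*}$.

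\emph{Step 2: the horizontal maps and commutativity.} Write $\iota\colon K\hookrightarrow L$. At chain level $\iota_{\#}\colon C_p(K)\to C_p(L)$ is the inclusion of the coordinate subspace spanned by the simplices of $K$, and at cochain level restriction $C^p(L)\to C^p(K)$, $\varphi\mapsto\varphi|_{C_p(K)}=\varphi\circ\iota_{\#}$, is exactly the transpose $\iota_{\#}^{T}$ under the identification of Step 1. Passing to (co)homology, the bottom map $H^p(L)\to H^p(K)$ of the diagram therefore corresponds, through $\theta$, to the transpose $\bigl(H_p(\iota)\bigr)^{*}$ of the top map $H_p(\iota)\colon H_p(K)\to H_p(L)$. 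Commutativity is then the one-line verification that, for any cocycle $\varphi$ on $L$ and any cycle $z$ on $K$,
\[
\theta_K\bigl(\varphi|_{C_p(K)}\bigr)([z])=\varphi(\iota_{\#}z)=\theta_L(\varphi)\bigl(H_p(\iota)[z]\bigr),
\]
i.e.\ $\theta_K\circ(\text{restriction})=\bigl(H_p(\iota)\bigr)^{*}\circ\theta_L$; this is exactly the naturality of $\theta$, hence the commutative square claimed in the theorem.

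\emph{Main obstacle.} Everything above is routine linear algebra once set up: the annihilator identities, the isomorphism $W^{\perp}/V^{\perp}\cong(V/W)^{*}$, and the observation that restricting cochains is the transpose of including chains are all elementary. The only point demanding genuine care is \emph{compatibility}: one must make sure $\theta$ is manifestly the same construction on $K$ and on $L$, and that restriction is literally the transpose of the chain inclusion rather than merely a map of the same rank; with that pinned down, naturality and hence commutativity follow at once, and ``the vertical maps are isomorphisms'' is just finite-dimensionality. One may of course bypass the explicit bases entirely and run the identical argument by invoking the universal coefficient theorem together with its naturality as a black box.
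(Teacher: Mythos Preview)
Your argument is correct and is exactly the approach the paper points to: the paper does not actually prove this theorem but states it as ``a well known consequence of the universal coefficient theorem and contravariant functoriality of cohomology,'' citing \cite{de2011dualities}, and your Steps~1--2 are precisely the UCT-over-a-field isomorphism together with its naturality under the inclusion-induced maps.
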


Persistent cohomology is constructed from a filtration (we still assume the filtration function is injective) in a similar way as persistent homology. We refrain from repeating the entire construction and instead point out the fundamental differences of the computational aspect:
\begin{enumerate}
\item Instead of $p$-chains we define the $p$-\textbf{cochains} $C^p(K_i)$, which are the collection of linear maps $C_p(K_i) \to \FF$.
\item The coboundary map $d_p\colon C^p(K)\to C^{p+1}(K)$ is defined as $d (\f) (\sigma)=\f(\di \sigma)$.
 \item The resulting standard coboundary matrix is the transpose of the boundary matrix. This results in a lower-triangular matrix and would require us to use the mentioned column reduction in the opposite direction to compute cohomology. For this reason we rather define our \textbf{coboundary matrix} $d$ to be the \textbf{anti-transpose} of $\di$ as in~\cite{de2011dualities}. In particular, $d$ is obtained from $\di^T$ by reversing the order of simplices labeling columns and rows.
 \item \textbf{Persistent cohomology} is computed by applying Algorithm~\ref{Alg1} to $d$ and extracting persistent cohomology pairs from the reduced matrix $d'$ as before.
 \item It turns out that $(\sigma_i,\sigma_j)$ is a persistence homology pair iff $(\sigma_j,\sigma_i)$ is a persistence cohomology pair. Essential simplices coincide in both cases.
 \item A cohomology representative of a persistence cohomology pair is a cochain, i.e., a linear map from the space of chains into $\FF$.
\end{enumerate}

For each $p\in \{0,1,\ldots\}$ we define $\di_p$ to be the $p$-dimensional boundary matrix and $d_p$ to be the $p$-dimensional coboundary matrix.
In particular, the columns of $\di_p$ are labeled by $p$-simplices and the rows are labeled by $(p-1)$-simplices.
Matrix $d_p$ is the anti-transpose of $\di_{p+1}$.  These matrices form the block structure of $\di$ and $d$ respectively.

\subsection{Rips complexes}\label{SubsRips}

Given a finite metric space $(X,d)$ and $r>0$ the \textbf{Rips complex} is defined as $\Rips(X,r)=\{\sigma \subseteq X \mid \diam(\sigma)\leq r\}$. The Rips filtration of $X$ is the collection of all Rips complexes of $X$ for all positive $r$. In order to obtain a filtration with an injective filtration function we order the simplices of a Rips filtration as $\sigma_1, \sigma_2, \ldots$ so that:
\begin{itemize}
 \item If $\diam(\sigma_i) < \diam(\sigma_j)$ then $i<j$, i.e., we first order simplices by diameter.
 \item If $\diam(\sigma_i) = \diam(\sigma_j)$ and $\dim(\sigma_i) < \dim(\sigma_j)$ then $i<j$, i.e., we then order simplices by dimension.
\item Simplices of the same diameter and dimension are ordered amongst themselves in an arbitrary order.
\end{itemize}

If we use the same such an ordering in the construction of $\di$ and $d$ (i.e., if $d$ is the anti-transpose of $\di$) then the resulting persistence $(\sigma_i, \sigma_j)$ pairs are the same by Theorem~\ref{ThmMain}. Choosing a different order above may change some of the persistence pairs. However, the persistence diagram of the Rips filtration, defined as the multiset of points
$$
\{(\diam(\sigma_i), \diam(\sigma_j)) \mid (\sigma_i, \sigma_j) \textrm{ a persistence pair} \} \ \cup
$$
$$
 \{(\diam(\sigma_i), \infty) \mid \sigma_i \textrm{ an essential simplex} \},
$$
remains unchanged. If $\diam(\sigma_i)= \diam(\sigma_j)$ for a persistence pair $ (\sigma_i, \sigma_j) $ we say the corresponding interval is trivial.

For practical reasons the scales of $r$ are often bounded from above. When that is not the case the Rips complex for large scales takes the form of the full simplex on all points, resulting in a single essential simplex (the first vertex in our chosen order).

\section{Algorithm}

In this section we present our algorithm to compute persistent homology representatives.
Our algorithm will make use of the discrepancy between the running times for persistent homology and cohomology computations. We restrict our treatment to Rips complexes although the same algorithm can be used for other constructions as well. Recall that $\di$ denotes a boundary matrix while $d$ denotes a coboundary matrix. In Subsection~\ref{SubsAlgo} we provide an overview of the algorithm, while the details of specific steps are discussed in Subsection~\ref{SubsDet}. We provide a small demonstrative example in Subsection~\ref{SubsEx}.

\subsection{The  algorithm}\label{SubsAlgo}

\textbf{Input}: Start with a finite metric space $X$.

\textbf{Preprocessing}:
Construct a fixed injective filtration function associated to the Rips filtration of $X$. We generate coboundary matrices $d_k$.

\textbf{The main part}:

\begin{enumerate}
\item Reduce each coboundary matrix $d_k$. By Theorem~\ref{ThmMain} we may extract homological death simplices (i.e., cohomological birth simplices) and essential simplices.
\item For each $k$ let $D_k$ be the submatrix of the homology boundary matrix $\di_k$ consisting of columns corresponding to homological death simplices and essential simplices. We keep the indices of simplices to label the columns.
\item Compute persistent homology representatives by reducing $D_k$ using Algorithm~\ref{Alg1}.
\end{enumerate}

\textbf{Output}: Return homology representatives from the reduced forms of matrices $D_k$.

\begin{remark}
The algorithm is stated so that it encompasses all dimensions. However, we can choose a fixed dimension $q$ and perform the algorithm only for the corresponding dimension. In particular, representatives of $q$-dimensional persistent homology classes are $q$-chains. We obtain them by first reducing the coboundary matrix $d_{q-1}$, and then generate and reduce the corresponding $D_{q}$.
\end{remark}

\subsection{Details on the  algorithm}\label{SubsDet}

Input, preprocessing and output parts have been explained in the previous section. Detailed comments are provided for the main part:

\begin{description}
 \item[Part (1)] In the first step, we can employ a variety of computational tricks that were developed to make persistent cohomology efficient, such as clearing~\cite{chen2011persistent}, skipping emergent pairs, and using an implicit coboundary matrix representation~\cite{bauer2019ripser}. These tricks naturally favor the structure of cohomology and result in a reduction of the cohomology matrix in a much shorter time than the corresponding boundary matrix. This part of computation is implemented in the package Ripserer.jl~\cite{vcufar2020ripserer} as the main workflow to compute persistent cohomology. \item[Part (2)] From the reduced matrices $d_k$ we deduce which columns of the boundary matrices $\di_k$ are required for extraction of homology representatives (formally, reduced $d_{k}$ determines the columns of $D_{k+1}$). These are the columns corresponding to homological death simplices (these correspond to cohomological birth simplices) and essential simplices. Matrices $D_k$ are theoretically obtained by restricting $\di_k$ to the corresponding columns. In practice we refrain from constructing the entire $\di_k$ by instead constructing the smaller $D_k$ directly from the mentioned simplices. The omitted columns correspond to the paired birth simplices: these would have been reduced to trivial columns in the reduction of the boundary matrices and would not contribute to the extraction of representatives.

Even though we know which rows of $\di_k$ will contain a pivot, matrices $D_k$ keep all rows from $\di_k$ as any of them might  appear in an expression of homology representatives. In a similar fashion we keep even the death simplices corresponding to trivial intervals of the persistence diagram (except for the ones mentioned in the next paragraph) of the Rips filtration as they may be needed in later reductions. Another thing we can take into account is that we know when the last non-trivial interval will die. This allows us to further truncate $D_k$ to only include death simplices up to the death time of the last non-trivial interval as representatives of trivial intervals are typically not of interest.

  \item[Part (3)]
When the scale $r$ in the Rips filtration is unrestricted, all but one simplex is paired and we have thus removed one less than half of the columns from $\di$. Furthermore, the reduction of columns to trivial columns typically takes longer than a partial reduction. As a result, the omissions of part (2) significantly speed up the column reduction performed in this part in comparison to application of Algorithm~\ref{Alg1} on the boundary matrix.

In practice the computation is usually restricted to low dimensions, i.e., $k << n=|X|$. In this case the improvement is even more pronounced. The number of columns of $d_k$ equals the number of $k$-simplices, which is ${{n}\choose{k+1}}=\O(n^{k+1})$. However, each death $k$-simplex is paired to a $(k-1)$-simplex. As the number of $(k-1)$-simplices is ${{n}\choose{k}}=\O(n^{k})$, there are at most ${{n}\choose{k}}=\O(n^{k})$ many death $k$-simplices. These label the columns of $D_k$ and thus number of columns of $D_k$ is at most ${{n}\choose{k}}=\O(n^{k})$. In summary, our algorithm reduces the number of columns from ${{n}\choose{k +1}}=\O(n^{k+1})$ to less than ${{n}\choose{k}}=\O(n^{k})$. See Proposition~\ref{PropQuant} for precise quantities.
\end{description}

The obtained persistence homology representatives are the same as the ones obtained by the direct reduction of $\di_k$ as the removed columns have no effect on the reduction of other columns.

\begin{proposition}\label{PropQuant}
 Let $X$ be a metric space consisting of $n$ points. Then the persistent homology of the full simplex on $X$ via any filtration function (for example, Rips filtration) has exactly one essential simplex, namely the first vertex in our chosen order. Furthermore, for each $k \geq 1$ the number of death simplices in dimension $k$, $\mathcal{S}_k$, equals
 $$
\mathcal S_k= {n \choose k}-  {n \choose k-1}+ {n \choose k-2}- \ldots \pm  {n \choose 0}.
 $$
\end{proposition}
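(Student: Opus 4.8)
The plan is to use the full simplex on $X$, which I will call $\Delta$, since it is the terminal complex in the Rips filtration when $r$ is unrestricted. Its key feature is that it is contractible, so $H_0(\Delta) = \FF$ and $H_p(\Delta) = 0$ for all $p \geq 1$. Because the filtration function is injective, each $K_i$ is obtained from $K_{i-1}$ by adding a single simplex that is either a birth or a death simplex, and every simplex of $\Delta$ gets exactly one of these two labels. The essential simplices are exactly the birth simplices that are never paired; since the final homology is $\FF$ concentrated in degree $0$, there is exactly one essential simplex, and it must be a $0$-simplex (a vertex). A quick check shows it is the first vertex in the chosen order: the first $0$-simplex added creates the initial component and is never killed, whereas every later vertex is a death simplex (its single nonzero boundary entry, the component it merges, is already present), and the unique essential class lives in the component of the first vertex. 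This disposes of the essential-simplex claim and the $k=0$ bookkeeping.

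Next I would set up the counting identity. Let $b_k$ and $d_k$ denote the number of birth and death $k$-simplices respectively, so that $b_k + d_k = \binom{n}{k+1}$ (the total number of $k$-simplices of $\Delta$), and $\mathcal{S}_k = d_k$ is what we want. Each death $k$-simplex is paired with a unique birth $(k-1)$-simplex, and conversely every birth $(k-1)$-simplex other than the unique essential vertex is paired with a death $k$-simplex — this is because the pairing restricted to dimension $(k-1)$-births and $k$-deaths is a bijection onto the set of non-essential $(k-1)$-birth simplices, a standard consequence of the fact that essential simplices are exactly the unpaired births. Hence for $k \geq 1$,
$$
d_k = b_{k-1} - [k-1 = 0],
$$
where $[\,\cdot\,]$ is the Iverson bracket, accounting for the single essential vertex in dimension $0$. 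Combined with $b_{k-1} = \binom{n}{k} - d_{k-1}$ this gives the recursion
$$
d_k = \binom{n}{k} - d_{k-1} - [k = 1], \qquad d_0 = n - 1.
$$

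Finally I would solve the recursion by unrolling it. Telescoping $d_k = \binom{n}{k} - d_{k-1}$ from $d_k$ down to $d_0 = n - 1 = \binom{n}{0} - \text{(no wait, } n-1 = \binom{n}{1} - \binom{n}{0})$ yields
$$
d_k = \binom{n}{k} - \binom{n}{k-1} + \binom{n}{k-2} - \cdots \pm \binom{n}{0},
$$
and one checks the $[k=1]$ correction is absorbed consistently by the base case $d_0 = n-1$ (verifying $d_1 = \binom{n}{1} - (n-1) = 1$ against $\binom{n}{1} - \binom{n}{0} = n - 1$ — here I need to be careful about the exact form of the base case and the Iverson correction, so I would recheck the small cases $k=1,2$ explicitly). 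The alternating sign $\pm$ terminating at $+\binom{n}{0}$ when written out is exactly the claimed closed form.

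The main obstacle I anticipate is not any deep idea but getting the low-dimensional bookkeeping exactly right: the single essential vertex perturbs the clean bijection birth$_{k-1}\leftrightarrow$death$_k$ only in dimension $0$, and I must make sure this off-by-one is threaded correctly through the recursion so that the final alternating sum has precisely the right initial term and sign pattern. Verifying the formula against $n$ small (say $n = 2, 3$) and against the known Euler-characteristic relation $\sum_k (-1)^k \binom{n}{k+1} = $ (number of essential classes, appropriately signed) provides a useful consistency check.
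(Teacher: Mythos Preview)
Your overall strategy is the same as the paper's: use contractibility of the full simplex to see there is a unique essential simplex (a vertex), then count death $k$-simplices via the bijection with non-essential birth $(k-1)$-simplices and unroll the resulting recursion. So the architecture is fine.

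However, there is a concrete bookkeeping error that is causing the inconsistency you yourself flag at the end. You write that ``every later vertex is a death simplex (its single nonzero boundary entry, the component it merges, is already present)'' and take $d_0 = n-1$. This is wrong: a $0$-simplex has empty boundary, so its column in $\partial$ is zero and it is automatically a \emph{birth} simplex (it creates a new $H_0$ class). Hence $d_0 = 0$ and $b_0 = n$; of these $n$ birth vertices, one is essential and the remaining $n-1$ are paired with death edges, giving $d_1 = n-1 = \binom{n}{1}-\binom{n}{0}$ as desired. With the corrected base case $d_0 = 0$, your recursion $d_k = \binom{n}{k} - d_{k-1} - [k=1]$ yields $d_1 = n - 0 - 1 = n-1$ and then $d_k = \binom{n}{k} - d_{k-1}$ for $k\ge 2$, which telescopes cleanly to the stated alternating sum. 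The spurious value $d_1 = 1$ you obtained was precisely the symptom of the wrong $d_0$.

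Once that is fixed, your argument and the paper's coincide; the paper simply starts the induction at $k=1$ with $\mathcal S_1 = n-1$ and uses $\mathcal S_{k+1} = \binom{n}{k+1} - \mathcal S_k$, avoiding the need to discuss $d_0$ at all.
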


\begin{proof}
 The first part is trivial as the full simplex on $X$ is contractible. This also means that the only essential simplex is a vertex. The formula of the second part follows by induction on $k$. For $k=1$, the number of non-essential birth vertices equals $n-1$ and coincides with the number of death edges, thus 
 $\mathcal{S}_1=n-1 = {n \choose 1} - {n \choose 0}$. Consequently, the number of  birth edges equals ${n \choose 2} - \mathcal{S}_1 = {n \choose 2} - (n-1)$ and coincides with the number of death triangles $\mathcal{S}_2 = {n \choose 2}-{n \choose 1} + {n \choose 0}$.

 We proceed by induction. Assuming the number of death simplices in dimension $k$ equals $\mathcal S_k$, we conclude the number of birth simplices in dimension $k$ is ${n \choose k+1}-\mathcal S_k$. The latter are all included in persistence pairs in dimensions $(k,k+1)$. Their number thus equals the number of death simplices in dimension $k+1$ yielding $\mathcal S_{k+1}=\binom{n}{k+1}-\mathcal S_k$.
\end{proof}

\subsection{Example}\label{SubsEx}

The following is a demonstrative example based on a filtration of the full simplex (tetrahedron) on four points $a,b,c,d$, in which the order of the $1$- and $2$-simplices is given by the order in the boundary matrix $\di_2$. We will be using coefficients in $\mathbb{Z}_2$.

We first demonstrate the standard column reduction algorithm. Using Algorithm~\ref{Alg1} we reduce matrix $\di_2$ column-wise from left to right to $\di'_2$. In our case, only the last column reduces to zero. The pivots (blue entries) describe persistence pairs $(bc, abc)$, $(bd, abd)$, and $(cd, acd)$. The corresponding representatives are encoded in the reduced columns. For example, the representative cycle of persistence pair  $(bc, abc)$ is $[ab]+[ac]+[bc]$. (While the said column was unchanged in this example, the columns in general change with the use of Algorithm~\ref{Alg1}).
$$
\di_2=\begin{pNiceArray}{cc|cc}[first-row,first-col]
&abc & abd & acd & bcd \\
ab &1&1&& \\
ac&1&&1&\\
\hline
ad&&1&1&\\
bc&1&&&1\\
\hline
cd&&&1&1\\
bd&&1&&1\\
\end{pNiceArray}
\qquad
\di'_2=\begin{pNiceArray}{cc|cc}[first-row,first-col]
&abc & abd & acd & bcd \\
ab &1&1&& \\
ac&1&&1&\\
\hline
ad&&1&1&\\
bc&\ii{3}&&&\\
\hline
cd&&&\ii{1}&\\
bd&& \ii{2}&&\\
\end{pNiceArray}
$$

We now demonstrate our algorithm on the same example. We first (implicitly) form the coboundary matrix $d_1$ as the anti-transpose of $\di_2$, and use the matrix reduction implemented in Ripserer.jl to reduce it to $d'_1$. The same result would be obtained using Algorithm~\ref{Alg1}. From $d'_1$ we extract the three cohomology birth (equivalently, homology death) simplices $abc, abd$, and $acd$, and generate (again, implicitly) the boundary submatrix $D_2$, which is the restriction of $\di_2$ to the columns of the mentioned homology death simplices. We then use Algorithm~\ref{Alg1} on $D_2$ to obtain $D'_2$, and extract homology representatives of persistence pairs as above.
$$
d_1=\begin{pNiceArray}{cc|cc|cc}[first-row,first-col]
&bd &  cd& bc & ad & ac & ab \\
bcd&1&1&1&&&\\
acd&&1&&1&1&\\
\hline
abd&1&&&1&&1\\
abc&&&1&&1&1\\
\end{pNiceArray}
\quad
d'_1=
\begin{pNiceArray}{cc|cc|cc}[first-row,first-col]
&bd &  cd& bc & ad & ac & ab \\
bcd&1&1&1&&&\\
acd&&\ii{6}&&&&\\
\hline
abd&\ii{5}&&&&&\\
abc&&&\ii{4}&&&\\
\end{pNiceArray}
$$
$$
D_2=\begin{pNiceArray}{cc|c}[first-row,first-col]
&abc & abd & acd\\
ab &1&1&\\
ac&1&&1\\
\hline
ad&&1&1\\
bc&1&&\\
\hline
cd&&&1\\
bd&&1&\\
\end{pNiceArray}
\qquad
D'_2=\begin{pNiceArray}{cc|c}[first-row,first-col]
&abc & abd & acd  \\
ab &1&1& \\
ac&1&&1\\
\hline
ad&&1&1\\
bc&\ii{7}&&\\
\hline
cd&&&\ii{8}\\
bd&& \ii{9}&\\
\end{pNiceArray}
$$
Note that $D'_2$ consists of all non-trivial columns of $\di'_2$, i.e., of all the columns that carry the information on the representative cycles. By restricting $\di_2$ to $D_2$ we have thus refrained from reducing the last column to zero. While in this demonstrative example our algorithm takes more time than the direct application of Algorithm~\ref{Alg1}, theoretical (Proposition~\ref{PropQuant}) and practical (see the experiments of the forthcoming section) evidence demonstrates that the improvement to the running time when computing lower-dimensional homology representatives on larger pointclouds can be significant.

\section{Experiments}

We implemented the algorithm presented in this paper in our Julia~\cite{bezanson2017julia} package Ripserer.jl~\cite{vcufar2020ripserer}. We compare the timings of our algorithm with the standard homology algorithm (also in Ripserer.jl), as well as Eirene.jl~\cite{henselman2016matroid}, which is a popular choice for computing representatives.

The benchmarks were performed using Julia v1.8.1, Ripserer v0.16.12, and Eirene v1.3.6 on a cluster equipped with Intel\textregistered Xeon\textregistered CPU E5-2680 v4 @ 2.40GHz. While we have not measured the memory consumption of the algorithms, it is worth noting that standard homology requires an order of magnitude more memory than either of the other two. We ran each benchmark 10 times and report the minimum running times. To ensure consistency, the benchmark for each data set was performed in a single continuous run.

We measured the runtimes for various data sets (Table~\ref{tab:eirene}, Figures~\ref{fig:speedups} and~\ref{fig:dim-scaling}) as well as increasing subsets of select datasets (Figure~\ref{fig:n-scaling}). We have also measured the sizes of boundary, coboundary, and involuted boundary matrices (Table~\ref{tab:matrix-size}).

Most of the data sets used in the experiments were taken from~\cite{otter2017roadmap} and~\cite{bauer2019ripser}. We have also added a new data set, gcycle. The data set included in the benchmarks are described below.

\begin{itemize}
\item \textbf{gcycle} is the distance matrix obtained by computing all shortest paths on a cycle graph. It has only one homological feature and is easy to scale up without changing the topological properties of the data.

\item \textbf{random16}~\cite{otter2017roadmap} is a set of 50 points sampled uniformly from $\mathbb{R}^{16}$.  It presents a good benchmark for scaling with homology dimension as it has no shortage of homological features even in higher dimensions. Random VR complexes were studied in~\cite{kahle2011random}.

\item \textbf{o3\_1024} and \textbf{o3\_4096}~\cite{bauer2019ripser} are sets of 1024 and 4096 orthogonal $3 \times 3$ matrices. Due to the larger number of points, these present a more computationally challenging benchmark.

\item \textbf{hiv}~\cite{otter2017roadmap} is derived from genomic sequences of the HIV virus. The authors of~\cite{otter2017roadmap} constructed a finite metric space using the independent and concatenated sequences of the three largest genes found in the genome. They compute Hamming distances between 1088 of the genes. The sequences were taken from the Los Alamos National Laboratory database. These sequences were also studied using PH in~\cite{chan2013topology}.

\item \textbf{celegans}~\cite{otter2017roadmap} is a weighted, undirected graph of the neuronal network of \textit{Caenorhabditis elegans} converted to a distance matrix. In this network, the nodes represent neurons, while edges represent the synapses or gap junctions. The network was studied using PH in~\cite{petri2013topological}.

\item \textbf{dragon}~\cite{otter2017roadmap} is a set of 1000 points sampled uniformly from the 3-dimensional scan of the Stanford Dragon.

\end{itemize}

\begin{figure}
  \includegraphics[width=.8\textwidth]{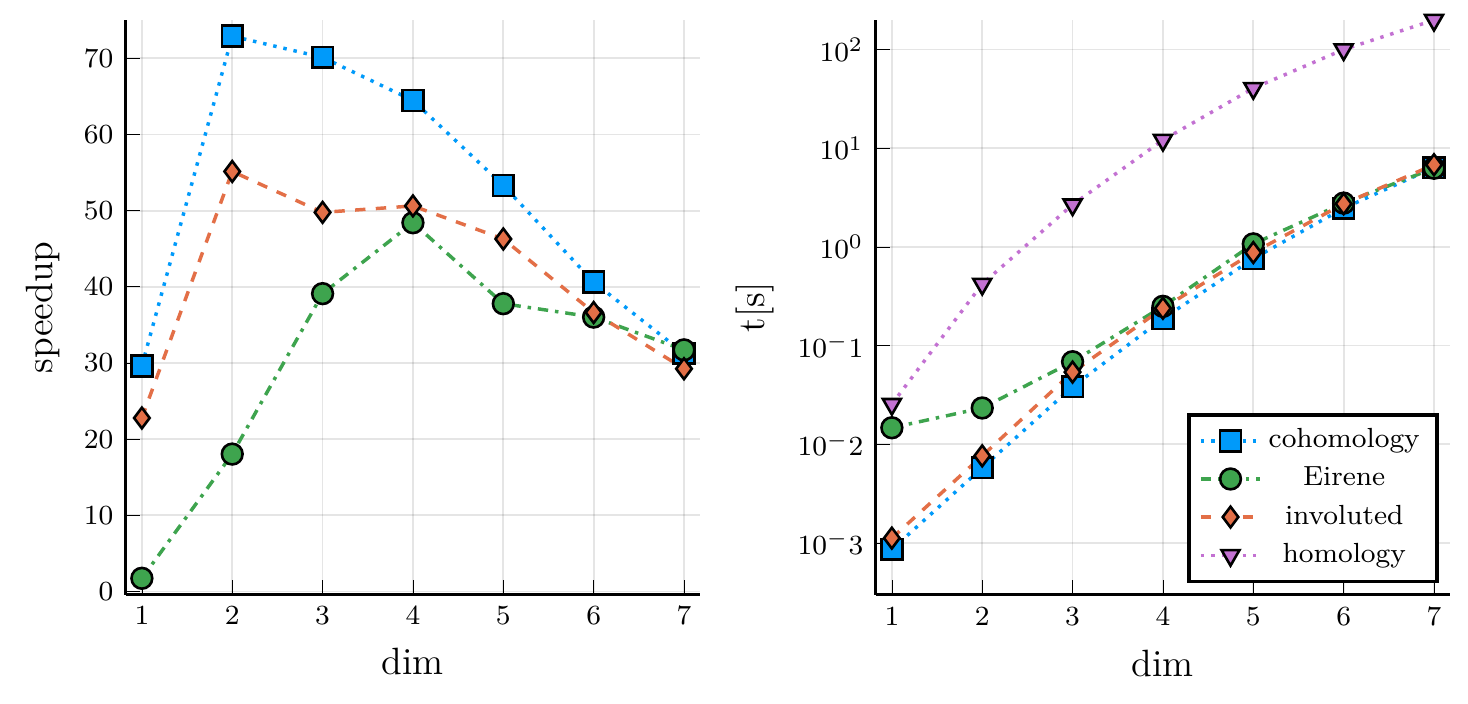}
  \caption{\label{fig:dim-scaling}Timings of our code and Eirene with increasing maximum homology dimension on a data set of 50 random points in $\mathbb{R}^{16}$. The left pane shows relative speedups compared to the homology computation, the right panes show the elapsed time in seconds in a logarithmic scale.}
\end{figure}

\begin{figure}
  \includegraphics[width=.8\textwidth]{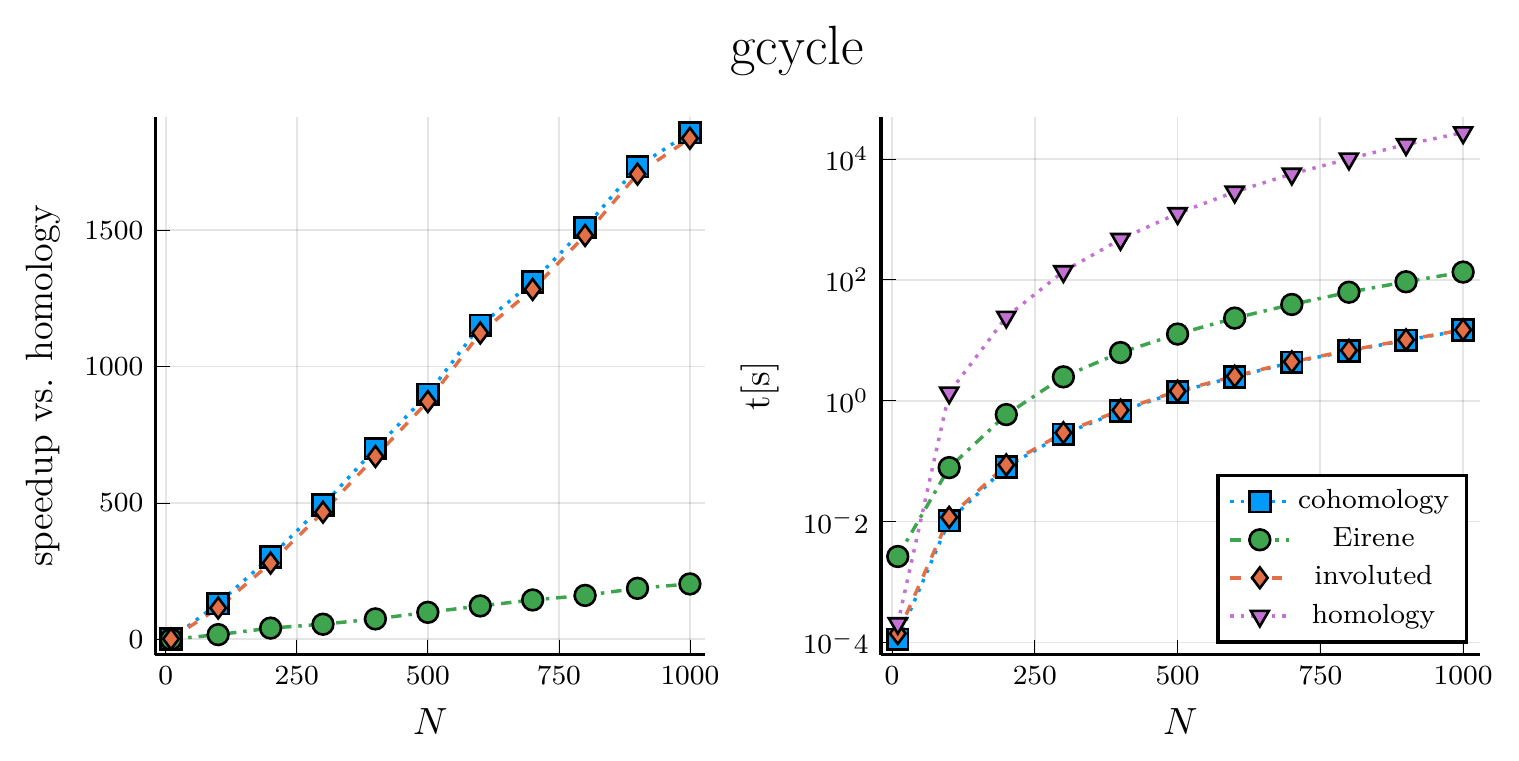}
  \includegraphics[width=.8\textwidth]{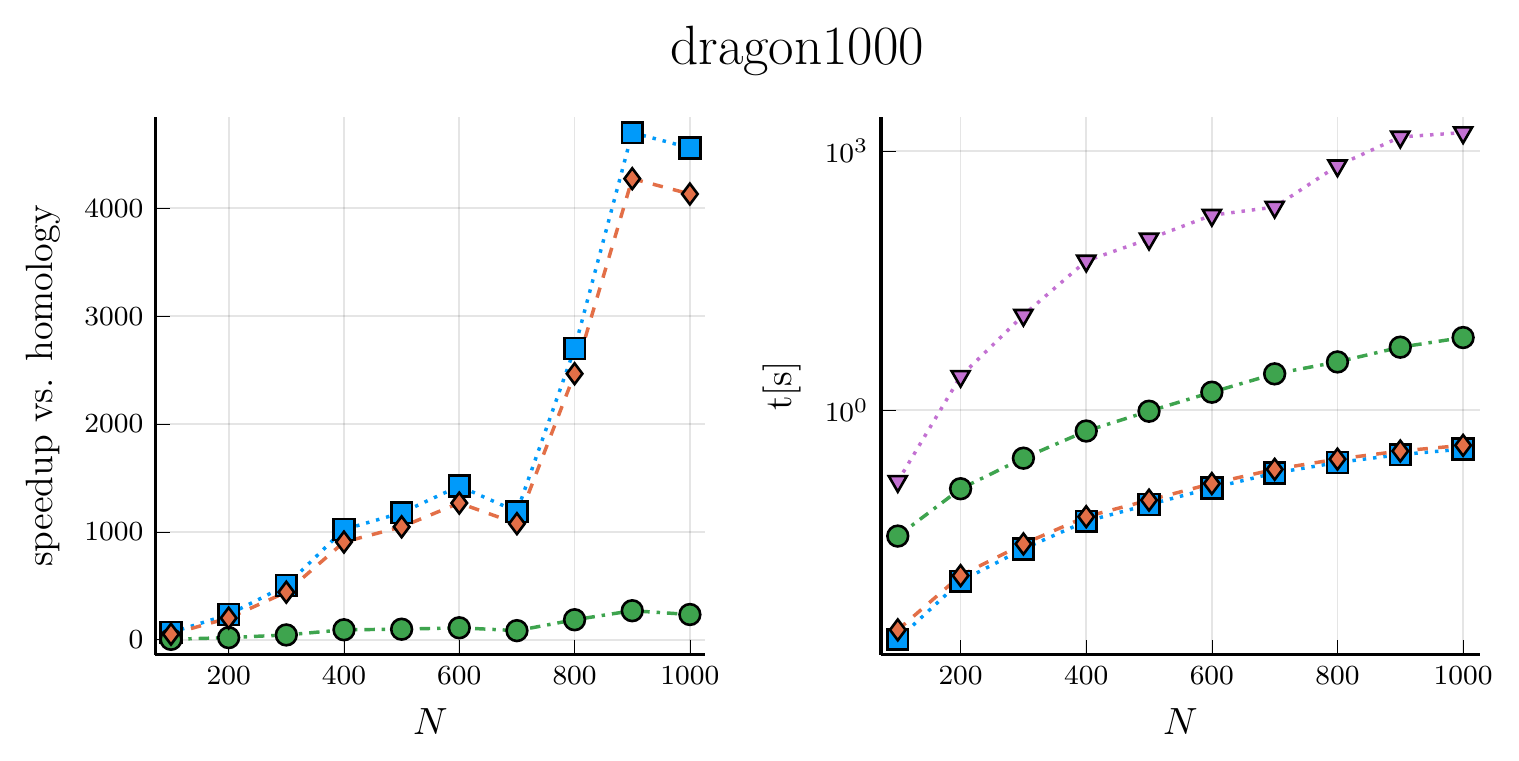}
  \includegraphics[width=.8\textwidth]{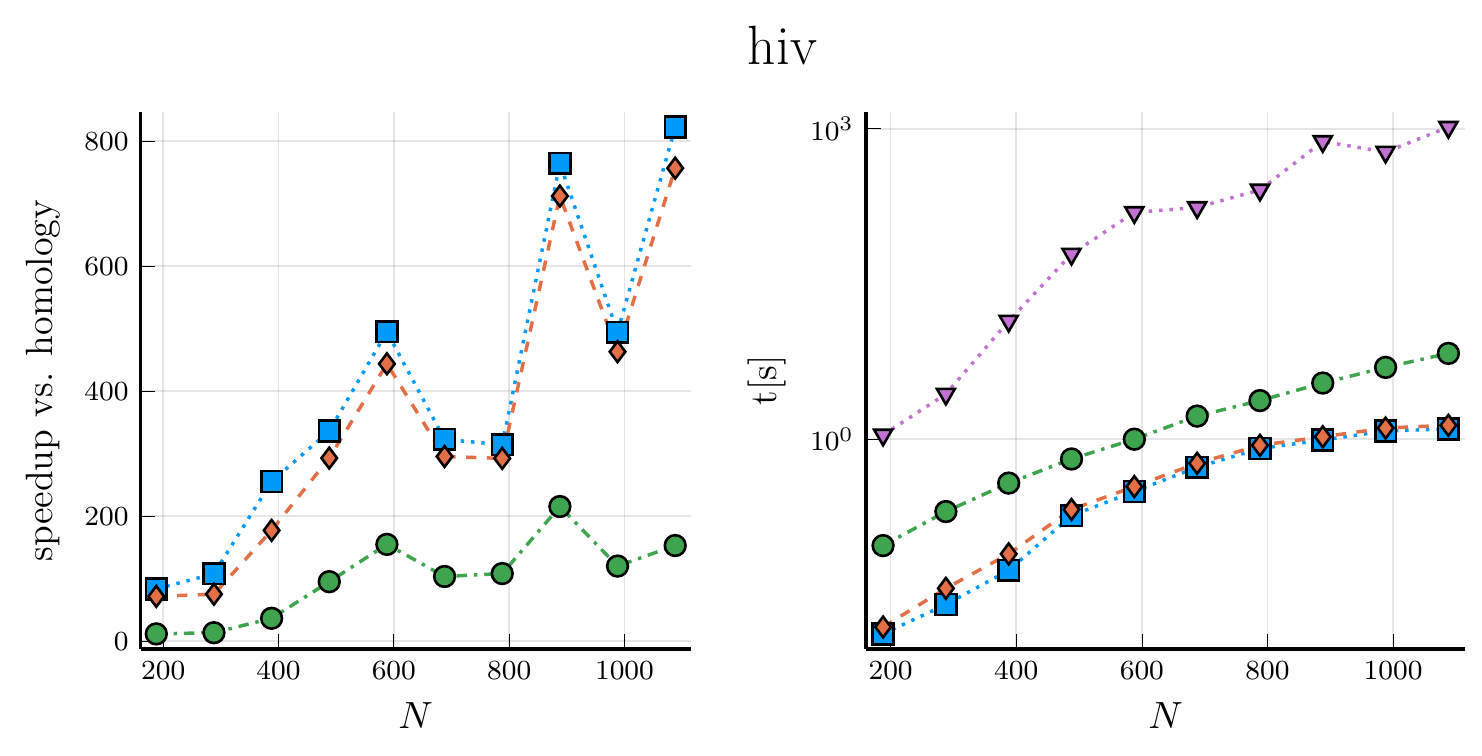}

  \caption{\label{fig:n-scaling}Timings of computing one-dimensional persistent homology with our code and Eirene on increasing number of points $N$ in three data sets. The left panes show relative speedups compared to the homology computation, the right panes show the elapsed time in seconds in a logarithmic scale. The maximal computeds dimension of persistent homology are noted in Table~\ref{tab:eirene}. The inconsistent improvements in the \textbf{dragon1000} and \textbf{hiv} can be explained by the fact that adding points to a complex data set may drastically change its persistent homological features. }
\end{figure}

\begin{figure}
  \includegraphics[width=.8\textwidth]{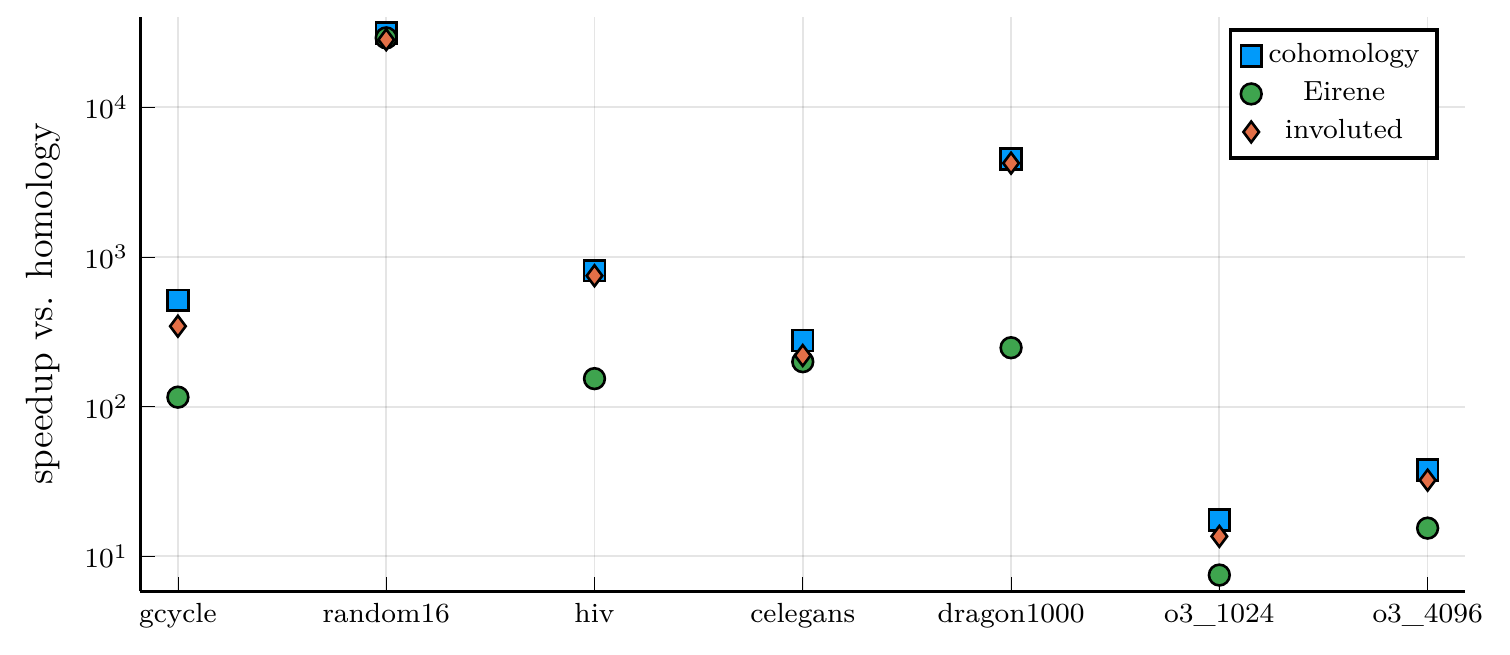}
  \caption{\label{fig:speedups}Speedups compared to homology computations for various data sets.}
\end{figure}

\begin{table}
  \begin{tabular}{c|c c c c c c}
    name & $N$ & $\dim$ & $r_{\max}$ & $m_d$ & $m_\partial$ & $m_D$ \\
    \hline\hline
    gcycle     &  100 & 3 & 40 &   106,021 &  10,602,020 &    106,021 \\
    random16   &   50 & 3 &    &   54,026  &     284,157 &     22,394 \\
    hiv        & 1088 & 1 &    &   517,622 & 155,009,693 &    165,623 \\
    celegans   &  297 & 2 &    & 3,735,740 & 256,663,737 &    918,390 \\
    dragon1000 & 1000 & 1 &    &   317,442 &  56,110,140 &     21,275 \\

  \end{tabular}
  \caption{\label{tab:matrix-size}Comparison of matrix sizes. $N$ is the number of points in the data set, $\dim$ the maximum homology dimension computed, and $r_{\max}$ the threshold applied. $m_d$, $m_\partial$, and $m_D$ are the number of columns of $d$, $\partial$, and $D$.}
\end{table}

\begin{table}
  \begin{tabular}{c|c c c c c c c}
    name & $N$ & $\dim$ & $r_{\max}$ & $t_h$ & $t_c$ & $t_i$ & $t_e$ \\
    \hline\hline
    gcycle     & 100  & 3 &  40 &  11 min & 1.262s  & 1.882s  & 5.606s \\
    random16   & 50   & 7 &     &    53 h & 6.079s  & 6.749s  & 6.564s \\
    hiv        & 1088 & 1 &     &  17 min & 1.272s  & 1.375s  & 6.696s \\
    celegans   & 297  & 2 &     &  12 min & 2.652s  & 3.336s  & 3.669s \\
    dragon1000 & 1000 & 1 &     &  28 min & 372.2ms & 396.4ms & 6.775s \\
    o3\_1024   & 1024 & 3 & 1.8 & 55.01 s & 3.150s  & 4.047s  & 7.331s \\
    o3\_4096   & 4096 & 3 & 1.4 & 1h 9min & 109.9s  & 128.7s  & 269.3s \\
  \end{tabular}
  \caption{\label{tab:eirene}Comparison of computation times between homology, cohomology, involuted homology, and Eirene. $N$ is the number of points in the data set, $\dim$ the maximum homology dimension computed, and $r_{\max}$ the threshold applied. $t_h$, $t_c$, $t_i$, and $t_e$ are the timings for homology, cohomology, involuted homology, and Eirene, respectively.}
\end{table}

\section{Analysis}

\subsection{Comparisons for coboundary, boundary and involuted boundary matrix reductions.}

The matrix sizes of $d$ (Table~\ref{tab:matrix-size}) are, as expected, lower than those of the homology reductions by orders of magnitude. On the other hand, they are larger than those of the reduction of $D$, sometimes by an order of magnitude. Thus for all our experiments, the involuted computation of homology of representatives is orders of magnitude faster than the direct reduction though homology matrix. While the involuted part of the algorithm (the reduction of $D$) naturally adds running time in addition to the cohomology reduction (the reduction of $d$ only), the total addition is mostly small (Figures~\ref{fig:dim-scaling},~\ref{fig:n-scaling}, and~\ref{fig:speedups}, Table~\ref{tab:eirene}) when compared to the cohomology reduction alone. As a result it appears that whenever cohomological reduction is feasible, so is the involuted homology reduction.

\subsection{Comparison of computation times between cohomology, involuted homology, and Eirene.} Our results suggests that involuted homology computation compares favorably (Figures~\ref{fig:dim-scaling},~\ref{fig:n-scaling}, and~\ref{fig:speedups}, Table~\ref{tab:eirene}). The only case when Eirene performed slightly better was the case of random points in $\mathbb{R}^{16}$, indicating that Eirene might be well suited to some types of data. On other datasets our approach performed much better, with speedups up to 17 times faster compared to Eirene's running time.

\section{Conclusions}

We have demonstrated the feasibility of the involuted persistent homology computations to obtain the persistent homology representatives. The running time of our algorithm is comparable to that of cohomology reduction (which does not yield homology representatives), and is orders of magnitude smaller than the standard homology reduction (the standard method to obtain the homology representatives). When compared to Eirene, our algorithm generally computes representatives faster (except for high-dimensional representatives of random points) with the improvement sometimes being of the orders of magnitude.

In summary, when computing persistent cohomology, it does not take much to compute the representative homology cycles as well. Furthermore, it appears that our approach is, in most cases, the most efficient way to obtain homology representatives.

\bibliographystyle{plain}
\bibliography{article}

\end{document}